\journal{Applied Mathematics and Computation}
\newtheorem{thm}{Theorem}[section]
\newtheorem{prop}[thm]{Proposition}
\newproof{proof}{Proof}
\newcommand{\be}{\begin{equation}}
\newcommand{\ee}{\end{equation}}
\renewcommand{\H}{\mathcal{H}}
\newcommand{\R}{\mathbb R}
\newcommand{\A}{\mathcal A}
\newcommand{\C}{\mathbb C}
\newcommand{\No}{\mathbb{N}\cup\{0\}}
\newcommand{\Z}{\mathbb Z}
\renewcommand{\phi}{\varphi}
\newcommand{\set}[1]{\left\{#1\right\}}
\renewcommand{\sc}[2]{\langle #1|#2 \rangle}
\newcommand{\norm}[1]{\left\Vert#1\right\Vert}
\renewcommand{\t}{\tau}
\newcommand{\dt}{\partial_\tau}
\newcommand{\T}{\mathbb T}
\numberwithin{equation}{section}
\begin{document}
\begin{frontmatter}

\title{Factorization method on time scales}

\author{Tomasz Goli\'nski}
\ead{tomaszg@math.uwb.edu.pl}
\address{\small University of Bia{\l}ystok, Institute of Mathematics\\Cio{\l}kowskiego 1M, 15-245~Bia{\l}ystok, Poland}

\begin{abstract}
  We present an approach to the factorization method for second order difference equations on time scales. We construct Hilbert spaces of functions on the time scale and show how to construct a chain of intertwined first order $\Delta$-difference operators $\A_k$ 
 in the sense that $\A_k$ and $\A_k^*$ act as ladder operators generating new solutions of eigenproblem of $\A_k\A_k^*$.
\end{abstract}

\begin{keyword}
factorization method \sep difference equations \sep time scales \sep ladder operators
\MSC[2010] 34N05 \sep 39A70
\end{keyword}

\end{frontmatter}


\section{Introduction}

The factorization method is one of the most useful tools for solving second order differential and difference equations, including Schr\"odinger equation. It dates back to the works by Darboux \cite{darboux} and it was successfully applied to many problems especially in quantum mechanics, see e.g. \cite{schrodinger-factorization,infeld,mielnik1984factorization}. A modern review of the method can be found in \cite{mielnik2004factorization,dong2007factorization}. It also has broader applications in mathematics including theories of integrable systems, orthogonal polynomials and special functions, see e.g. \cite{hahn,teschl}. Other applications of the factorization method in discrete case can be found e.g. in \cite{lorente,alvarez}.

The aim of this paper is to reformulate and generalize the results of papers \cite{OG,GO} for the case of calculus on time scales introduced in \cite{hilger1990}. Namely in those papers a factorization method for second order difference equations of the form
\be \label{0}\alpha(x)\psi(\t^2(x))+\beta(x)\psi(\t(x))+\gamma(x)\psi(x)= 0 \ee
was investigated, where $\t:X\to X$ is a bijection of a certain subset $X\subset \R$ of the real line. By introduction of $\t$-difference operator $\dt$ (see also e.g. \cite{klimek,ruffing2003difference})
\be\label{dt}\dt\phi(x):=\frac{\phi(x)-\phi(\t(x))}{x-\t(x)} \ee
equation \eqref{0} can be rewritten as 
\be \label{1}\tilde\alpha(x)\dt^2\psi(x)+\tilde\beta(x)\dt\psi(x)+\tilde\gamma(x)\psi(x)=0.\ee
The approach is based on the construction of a chain of Hilbert spaces $\H_k$ and representing left hand side of \eqref{0} as $\A^*_k \A_k \psi$ for some first order difference operators $\A_k:\H_k\to \H_{k+1}$ satisfying the relation
\be \A_k\A_k^*=a_k \A_{k+1}^*\A_{k+1} + b_k.\ee
This allows us to obtain new eigenfunctions of $\A_k^*\A_k$ from any known eigenfunction. It means that $\A_k$ and $\A_k^*$ behave as annihilation and creation operators (or ladder operators) of quantum mechanics. It can be also seen as an example of a ladder in the sense of \cite{hilger2010ladders}.
This approach to the factorization method for the equations \eqref{0} led to many results in the particular cases of difference, $q$-difference, and $(q,h)$-difference equations and they can be found e.g. in \cite{Alanowa,alina-qschroed,DGO,alina-filipuk,alina-grzegorz-q,alina-hounkonnou,alina-grzegorz-h}. 

In the case when the map $\t$ is strictly increasing and $X$ can be generated from a single point the operator $\dt$ is exactly the time scales $\Delta$-derivative and $\t$ plays role of forward shift operator $\sigma$. 
On one hand the time scales calculus can be seen as a generalization of $\dt$-calculus since it covers also differential case. On the other hand, the freedom of choosing $\t$ gives more possibilities than discrete component in the time scales approach --- for example $\tau$ can be chosen to be cyclic $\tau^N=\textrm{id}$. Our belief is that reformulation of results from aforementioned papers in another, more popular language might be valuable for further applications. 
However for the purpose of this paper we will restrict our attention to time scales where every point is isolated (excluding $\max \T$ and $\min \T$ if they exist, which are assumed to be one-side scattered). Treatment of the most general case poses more technical difficulties and will be investigated in the future.

In this paper we will not describe in details the $\dt$-calculus but concentrate solely on time scales case. In Section 2 we will briefly introduce elements of the time scales calculus. For more extensive presentations of the subject, see e.g. \cite{agarwal2002dynamic,bohner2002advances}. We also introduce the Hilbert spaces and operators which will be used later. Section 3 contains the main result, i.e. a method to construct a chain of factorized second order operators as well as an example in the case $\T=\Z$.

\section{Hilbert spaces and operators}

Let $\T$ be a closed subset of $\R$. 
One denotes by $\sigma$ a shift (forward jump) acting on time scale $\T\to\T$ defined by
\be \sigma(x) := \inf \;\{ t\in\T\; | \; t>x \}\ee
(where one additionally puts $\inf \emptyset := \max \T$) and the graininess of $\T$ is the following function 
\be\mu(x):=\sigma(x)-x.\ee 
For any set $X$, we denote by $X^\kappa$ (resp. $X_\kappa$) the set $X$ with the maximal point (resp. minimal point) removed.
If there is no maximal (minimal) point in $X$, these symbols denote just $X$.

For the purpose of this paper we restrict our attention to the sets 
where every point is isolated (excluding $\max \T$ and $\min \T$ if they exist, which are assumed to be only one-side scattered), i.e. $\sigma(x)\neq x$ for $x\in \T^\kappa$. The $\Delta$-derivative in that setting can be defined on $\T^\kappa$ as 
\be f^\Delta:=(f\circ\sigma-f)/\mu.\ee

There exists the measure $\mu_\Delta$ on $\T$ realizing the $\Delta$-integral, called Lebesgue $\Delta$-measure, see \cite{guseinov2003integration}.
We will consider the Hilbert space $L^2(\T,\rho \mu_\Delta)$ of (classes of equivalence of) square integrable functions on the time scale $\T$
with a fixed positive-valued weight function $\rho:\T^\kappa\to\R_+\cup\set0$.
The scalar product is given by
\be \label{sc}\sc{\psi}{\phi}:=\int_\T \overline{\psi(x)}\phi(x)\rho(x)\mu_\Delta(x)\ee
for functions $\psi,\phi:\T\to\C$. Note that some basic facts about these spaces can be found in \cite{ruffing2005corresponding}.

Note that since we assume that all points of $\T^\kappa_\kappa$ are isolated, the measure $\mu_\Delta$ is a pure point measure.
One usually assumes that the set $\set{\max \T}$ (if it exists) has infinite measure. However that breaks the expected behaviour of $\Delta$-Lebesgue integral compared to $\Delta$-Riemann integral for functions which don't vanish there. We will follow a more convenient convention and put $\mu_\Delta(\set{\max \T})=0$. In this way we get
\be \label{measure-point}\mu_\Delta(\set x) = \mu(x)\ee
for all $x\in \T$. Regardless of this choice, the $L^2$ space considered here is essentially the same and will be identified with set of functions on $\T^\kappa$ with
the scalar product expressed as a sum
\be \sc{\psi}{\phi}=\sum_{x\in \T^\kappa} \overline{\psi(x)}\phi(x)\rho(x)\mu(x).\ee
If $\T$ is unbounded then this space is infinite dimensional. 

In the paper we are going to use shift operators acting on functions. In order to simplify the formulas we introduce the notation
\be \label{shift} f^\sigma(x) := f(\sigma(x)) \; \chi_{\T^{\kappa\kappa}}(x)\ee
and
\be \label{shift-1} f^{\sigma^{-1}}(x) := f(\sigma^{-1}(x)) \; \chi_{\T^\kappa_\kappa}(x),\ee
where $\chi_A$ is the characteristic function of the set $A$. Terms $\chi_{\T^{\kappa\kappa}}$ and $\chi_{\T^\kappa_\kappa}$ are included in order not to duplicate values in some points. It is an analogue to the way shift operators are defined in the Toeplitz algebra.

Consider the shift operator $S$ acting in $L^2(\T,\rho \mu_\Delta)$ defined as
\be \label{S-def} S\psi := \psi^\sigma.\ee
It is in general unbounded and thus it is only defined on a dense domain. This domain can be chosen to contain functions with finite support (which are also dense in $L^2(\T,\rho \mu_\Delta)$). We will omit the detailed discussion of the domains and treat all operators formally. 
In case when $\T$ is bounded from above or below, the operator $S$ is not invertible. 
The adjoint operator is defined by
\be \sc{\psi}{S\phi}=\sc{S^*\psi}{\phi}\ee
and straightforward calculation produces the formula
\be \label{S*} S^*\psi=\frac{\rho^{\sigma^{-1}}}{\rho}\; (\sigma^{-1})^\Delta \; \psi^{\sigma^{-1}}.\ee
It is useful to note that composition of $S$ and $S^*$ is an operator of multiplication by a function:
\be \label{SS} S^*S =  \frac{\rho^{\sigma^{-1}}}{\rho}\; (\sigma^{-1})^\Delta ,\qquad 
SS^* = \frac{\rho}{\rho^\sigma}\; (\sigma^{-1})^{\Delta,\sigma}. \ee 
As a consequence one finds that the norm of the operator $S$ is equal
\be \norm S = \sqrt{\norm{S^*S}} = \sqrt{\sup_{x\in \T^\kappa_\kappa} \frac {\rho(\sigma^{-1}(x))}{\rho(x)}\;(\sigma^{-1})^\Delta(x)} \ee
and if the supremum is finite then the operator is bounded and can be defined on the whole $L^2(\T,\rho \mu_\Delta)$.

Let us now consider a sequence of weight functions $\rho_k:\T^\kappa\to\R_+\cup\set0$ and corresponding Hilbert spaces $\H_k:=L^2(\T,\rho_k \mu_\Delta)$ for $k$ in some subset of $\Z$. Following \cite{OG} we assume that weight functions satisfy a recurrence relation
\be \rho_{k+1}=(B_k\rho_k)^\sigma\ee
and the following Pearson-like equation
\be \label{pearson}(B_k \rho_k)^\Delta=A_k\rho_k\ee
for some functions $A_k, B_k:\T\to\R$. It is an analogue of the situation in the theory of classical ($q$-)orthogonal polynomials where
$A_k$, $B_k$ are polynomials of respectively first and second order and orthogonality measure is a solution of a similar equation.

By introducing a new function family
\be \label{eta_def}\eta_k := B_k+\mu A_k\ee
and substituting it in place of $A_k$, equation \eqref{pearson} assumes the form
\be \label{eta_rho}\rho_{k+1}=\eta_k\rho_k.\ee

Let us now define further operators acting on the chain of Hilbert spaces $\H_k$.
In general these operators are also unbounded.
First, consider an operator of inclusion $I_k:\H_k\to\H_{k+1}$ acting as
\be I_k\psi := \psi.\ee
The adjoint operator 
can be computed using \eqref{sc}:
\be \sc{\psi}{I_k\phi}_{k+1}=\int_T \overline{\psi(x)} \phi(x)\rho_{k+1}(x) \mu_\Delta(x) = 
\int_T \overline {\psi(x)} \phi(x)\eta_k(x)\rho_{k}(x) \mu_\Delta(x).\ee
Thus we conclude
\be \label{I*} I_k^*\psi = \eta_k \;\psi.\ee


Since $\Delta$-derivative of functions defined on $\T^\kappa$ is defined only on $\T^{\kappa\kappa}$, in order to define a Hilbert space operator representing it, we will follow the convention used in definition of the operator $S$, namely we put $\Delta:\H_k\to\H_{k+1}$:
\be \Delta\psi(x) := \psi^\Delta(x) \chi_{\T^{\kappa\kappa}}(x).\ee
This definition is equivalent to 
\be \Delta = \frac{I_k}{\mu} (S-\chi_{\T^{\kappa\kappa}}).\ee
In order to compute adjoint of $\Delta$
\be \sc{\psi}{\Delta\phi}_{k+1}=\sc{\Delta^*\psi}{\phi}_{k}\ee
we use formulas \eqref{S*} and \eqref{I*} and obtain
\be \Delta^* \psi = (S^*-\chi_{\T^{\kappa\kappa}})\frac{\eta_k}\mu \psi.\ee
Alternatively one could derive this formula using integration by parts (see \cite[Thm. 1.28]{bohner2002advances}).

\section{Factorization method}\label{sec:factor}


We consider a sequence (chain) of the factorized second-order equations
\be \label{t-rown} \A_k^*\A_k\psi = \lambda\psi \ee
for $\psi\in\H_k$ with the operators $\A_k:\H_k\to \H_{k+1}$ given as first order $\Delta$-difference operators
\be\label{A}\A_k:=h_k\Delta + f_k I_k,\ee
where $f_k, h_k :\T\to\R$ and $k\in \No$.

Note that we consider $\A_k$ as a closed operator defined by \eqref{A} on a suitable dense domain containing functions with finite support. The adjoint operator $\A_k^*$ is automatically closed. As a consequence by von Neumann theorem (see e.g. \cite{akhiezer}) $\A_k^*\A_k$ is self-adjoint. It follows that the spectrum of $\A_k^*\A_k$ is real and thus $\lambda\in\R$. Another consequence is that eigenvectors for different eigenvalues are pairwise orthogonal.

The essence of the factorization method is to postulate the relation
\be \label{comm} \A_k\A_k^*=a_k \A_{k+1}^*\A_{k+1} + b_k\ee
for some constants $a_k$, $b_k\in \R$, $a_k\neq 0$, and all $k$. It is an analogue of classical commutation relations for creation and annihilation operators for a harmonic oscillator. Extra parameters  $a_k$, $b_k$ are included for greater flexibility.


The formula \eqref{comm} implies that from any solution $\psi\in\H_k$ of equation \eqref{t-rown} for some $k$ with the eigenvalue $\lambda$, one can obtain 
a new solution 
\be \label{new_sol}\psi^\uparrow=\A_k\psi \in\H_{k+1}\ee
for $k+1$ with the eigenvalue $\lambda^\uparrow = \frac{\lambda-b_k}{a_k}$ or conversely
\be \label{new_sol2}\psi^\downarrow=\A^*_{k-1}\psi \in\H_{k-1}\ee
for $k-1$ with the eigenvalue $\lambda^\downarrow = a_{k-1}\lambda+b_{k-1}$.

One of the methods of profiting from this observation is by noticing that the kernels of the operator $\A_k^*\A_k$ and the operator $\A_k$ coincide. It follows from the fact that for (possibly unbounded) operators acting in a Hilbert space one has $\ker \A_k^* \perp \operatorname{ran} \A_k$.
Usually it is much easier to find the elements of the kernel of the operator $\A_k$ than to solve the generic eigenproblem \eqref{t-rown}. So, if we succeed in finding elements $\psi_l$ of kernel of $\A_l$ for all $l\in\No$ we can obtain a sequence of solutions of equation \eqref{t-rown} for any fixed $k$:
\be \label{kern-sol}\psi_l^k = \A^*_{k}\ldots\A^*_{l-1}\psi_l \in\H_k\ee
for $l>k$. One can also perform the similar procedure with elements of the kernels of the operator $\A_k^*$.

In general this set of solutions needs not to be complete, but in many particular cases (e.g. for harmonic oscillator) it turns out to span the whole Hilbert space. 

The tricky part is to obtain the sequence of operators $\A_k$ satisfying the relation \eqref{comm}. To simplify the formulas we change the notation by replacing the function $f_k$ by the function
\be \label{phi_def}\phi_k:=f_k-\frac{h_k\chi_{\T^{\kappa\kappa}}}{\mu}.\ee
In this way we can express operators $\A_k$ and their adjoints $\A_k^*$ as
\be \A_k = I_k (h_k/\mu\; S+\phi_k),
\qquad \A_k^*
= (S^*\; h_k/\mu+\phi_k)I_k^*.\ee
Note that operators $\A_k^*$ are related to backward $\nabla$-derivative.

If we additionally denote by $g_k$ the ratio of subsequent functions $B_k$:
\be \label{g}B_{k+1}=g_k B_k,\ee
we conclude that similar rule is satisfied by functions $\eta_k$:
\be \label{eta}\eta_{k+1}=(\eta_k g_k)^\sigma.\ee

Using this notation we obtain the following proposition:

\begin{prop}
The equation \eqref{comm} is equivalent to the following set of relations
\be \label{phi}h_{k+1}\phi_{k+1}=\frac{1}{a_k} \frac{h_k \; \phi_k{}^\sigma}{g_k{}^\sigma},\ee
\be\label{eq}
a_k\; g_k\left(\frac{B_k\;({h_{k+1}}^{\sigma^{-1}})^2}{\mu\; \mu^{\sigma^{-1}}}-\frac1{a_k}\frac{{\phi_k}^2\eta_k}{g_k}\right)+b_k=\frac{{B_k}^\sigma\; {h_k}^2}{\mu^\sigma\; \mu}-\frac1{a_k}\frac{({\phi_k}^\sigma)^2 \; {\eta_k}^\sigma}{{g_k}^\sigma}\frac{h_k{}^2}{h_{k+1}{}^2}\ee
on the functions $B_k$, $\eta_k$, $\phi_k$, $g_k$, $h_k$, $h_{k+1}$ and constants $a_k$, $b_k$ for all $k$.
\end{prop}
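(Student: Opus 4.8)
The plan is to expand both operators appearing in \eqref{comm} into a canonical three-term normal form and then compare coefficient functions. Both $\A_k\A_k^*$ and $a_k\A_{k+1}^*\A_{k+1}+b_k$ are self-adjoint second-order $\Delta$-difference operators on $\H_{k+1}$, so each can be written as $P+QS+RS^*$ with multiplication operators $P,Q,R$; self-adjointness forces $P$ to be a real multiplication and ties $R$ to $Q$ through the relation $(QS)^*=RS^*$, computed in the common inner product of $\H_{k+1}$. Hence equality of the two sides reduces to matching only two genuinely independent pieces, the coefficient of the shift $S$ and the pure multiplication (diagonal) part; the $S^*$-coefficient equation is then automatic, being the adjoint of the $S$-equation.

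First I would bring $\A_k\A_k^*=I_k(h_k/\mu\,S+\phi_k)(S^*h_k/\mu+\phi_k)I_k^*$ into normal form. Expanding the middle product gives four terms; using the commutation rules $Sf=f^\sigma S$ and $S^*f=f^{\sigma^{-1}}S^*$ (which follow at once from \eqref{S-def} and \eqref{S*}), the cross terms $h_k/\mu\,S\phi_k$ and $\phi_k S^*h_k/\mu$ turn into multiples of $S$ and $S^*$, while $h_k/\mu\,SS^*h_k/\mu$ and $\phi_k^2$ are multiplications, the former evaluated through \eqref{SS}. Conjugating by $I_k$ and by the multiplication operator $I_k^*=\eta_k$ from \eqref{I*}, and again commuting the shift past $\eta_k$, attaches the factor $\eta_k^\sigma$ to the shift terms. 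I would run the analogous expansion for $\A_{k+1}^*\A_{k+1}=(S^*h_{k+1}/\mu+\phi_{k+1})\eta_{k+1}(h_{k+1}/\mu\,S+\phi_{k+1})$, where $I_{k+1}^*I_{k+1}$ is multiplication by $\eta_{k+1}$ and the term $S^*(\cdots)S$ is again a multiplication via $S^*S$ in \eqref{SS}.

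Matching the coefficient of $S$ then equates $(h_k/\mu)\phi_k^\sigma\eta_k^\sigma$ on the left with $a_k\phi_{k+1}\eta_{k+1}h_{k+1}/\mu$ on the right; cancelling $1/\mu$ and substituting $\eta_{k+1}=(\eta_k g_k)^\sigma=\eta_k^\sigma g_k^\sigma$ from \eqref{eta}, then dividing out $\eta_k^\sigma$, yields \eqref{phi} directly. What remains is to equate the diagonal multiplication parts, and I expect this to be the main obstacle: it means collecting $\eta_k\big(h_k/\mu\,SS^*h_k/\mu+\phi_k^2\big)$ on the left against $a_k\,S^*(h_{k+1}/\mu)\eta_{k+1}(h_{k+1}/\mu)S+a_k\phi_{k+1}^2\eta_{k+1}+b_k$ on the right, and then rewriting every occurrence of $SS^*$, $S^*S$, together with the accompanying $\rho_k$-ratios and factors $(\sigma^{-1})^\Delta$, in terms of $B_k$, $g_k$ and $\eta_k$ by means of \eqref{pearson}, \eqref{eta_def} and \eqref{g}. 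The delicate bookkeeping is tracking the boundary characteristic functions $\chi_{\T^{\kappa\kappa}}$ and $\chi_{\T^\kappa_\kappa}$ while commuting $S$ and $S^*$, so that $\sigma$ and $\sigma^{-1}$ cancel only away from the endpoints; once this is handled, the diagonal identity rearranges exactly into \eqref{eq}, and conversely \eqref{phi} together with \eqref{eq} reconstitute the two coefficient matchings, establishing the claimed equivalence.
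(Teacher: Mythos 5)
Your proposal follows essentially the same route as the paper: expand both sides of \eqref{comm} into normal form $P+QS+RS^*$ using the commutation rules $Sf=f^\sigma S$, $S^*f=f^{\sigma^{-1}}S^*$ together with \eqref{SS} and \eqref{I*}, and match the coefficients of $S$ and of the multiplication part to obtain \eqref{phi} and \eqref{eq} respectively. Your observation that the $S^*$-coefficient equation is automatic by self-adjointness is a mild streamlining of the paper's version, which writes out both the $S$ and $S^*$ equations and notes that they reduce to the same relation \eqref{phi}.
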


\begin{proof}
The proof is technical and requires expanding right and left hand sides of \eqref{comm} using identities \eqref{SS} and noting that
\be S(f\psi) = f^\sigma S\psi, \qquad S^*(f\psi) = f^{\sigma^{-1}} S^*\psi, \ee
\be I_k S^* I_k^* = \eta_k S^*.\ee
Comparing the coefficients in front of $S$, $S^*$ and the free term one obtains the following equations
\be \frac{h_k \phi_k{}^\sigma \eta_k{}^\sigma}{\mu} = a_k \frac{\phi_{k+1}\eta_{k+1} h_{k+1}}{\mu},\ee
\be \frac{h_k{}^{\sigma^{-1}} \phi_k \eta_k}{\mu{}^{\sigma^{-1}}} = a_k \frac{\phi_{k+1}{}^{\sigma^{-1}}\eta_{k+1}{}^{\sigma^{-1}} h_{k+1}{}^{\sigma^{-1}}}{\mu^{\sigma^{-1}}},\ee
\be \frac{{h_k}^2 B_k{}^\sigma ((\sigma^{-1})^\Delta)^\sigma \chi_{\T^{\kappa\kappa}}}{\mu^2} + {\phi_k}^2\eta_k = 
a_k\left(\frac{({h_{k+1}}^2)^{\sigma^{-1}} B_k g_k (\sigma^{-1})^\Delta \chi_{\T^\kappa_\kappa}}{(\mu^2)^{\sigma^{-1}}} + {\phi_{k+1}}^2\eta_{k+1}\right)+b_k.\ee
Using identities obtained before one shows that the first and second equations are equivalent to \eqref{phi} and the third is equivalent to \eqref{eq}.
\qed
\end{proof}

In the particular case when $h_k \equiv 1$ for all $k$, the equation \eqref{phi} can be seen as a rule to obtain $\phi_{k+1}$ from $\phi_k$ using $g_k$ (just like \eqref{g} and \eqref{eta}). Thus the sequences of functions $B_k$, $\eta_k$, $\phi_k$ can be obtained using \eqref{g}, \eqref{eta}, \eqref{phi} from their first terms $B_0$, $\eta_0$, $\phi_0$ and the sequence of functions $g_k$ which have to satisfy equations
\be
a_k\; g_k\left(\frac{B_k}{\mu\; \mu^{\sigma^{-1}}}-\frac1{a_k}\frac{{\phi_k}^2\eta_k}{g_k}\right)+b_k=
\left(\frac{B_k}{\mu\; \mu^{\sigma^{-1}}}-\frac1{a_k}\frac{{\phi_k}^2\eta_k}{g_k}\right)^\sigma .
\ee

These are non-linear functional equations on functions $g_k$ (closely related to Riccati equation) but still some classes of solutions are known.

For example in the case of $q$-calculus $\T = \overline{c q^{\No}}$, $0<q<1$, $c\in\R$, one can put $h_k\equiv 1$, $f_k\equiv 0$ and require that $A_k$ are polynomials of the first degree and  $B_k$ polynomials of the second degree. Then the operator $\A_k$ is just the $q$-derivative $\partial_q$ and so $\ker \A_k$ consists of constant functions. Thus one obtains by \eqref{kern-sol} polynomials as eigenvectors of $\A_k^*\A_k$. From self-adjointness of $\A_k^*\A_k$ it follows that they are orthogonal with respect to the measure $\rho_k d_q$ on the time scale $\T$. The equations \eqref{phi}-\eqref{eq} in this case become trivial and give consistency conditions on the value of constants $a_k$ and $b_k$, see \cite[Example 5.2]{GO} for details. Solving equation \eqref{pearson} one can find the measure $\rho_k$ given the polynomials $A_k$ and $B_k$. In this way many known families of $q$-orthogonal polynomials can be obtained. Note that this case requires a slight relaxation of the conditions assumed here for the time scale.

Other particular examples are mostly obtained by postulating some forms of structural functions, e.g. $g_k \equiv \operatorname{const}$.
Then solutions can be found by such methods as iteration, infinite matrix products, power series expansions, see aforementioned papers \cite{OG,Alanowa,alina-qschroed} for more details and examples with applications e.g. for a $q$-deformed Schr\"odinger equations.

Since the problem is underdetermined, another approach to solving the equation \eqref{eq} would be to consider it as a non-explicit rule to obtain $h_{k+1}$ from $h_k$ given initial values $h_0$, $B_0$, $\eta_0$, $\phi_0$ and the whole sequence of functions $g_k$. Consider the simplest case with trivial weight functions $\rho_k \equiv 1$. For consistency we need $B_k\equiv 1$, $\eta_k\equiv 1$, $g_k\equiv 1$. In this case equations \eqref{phi}-\eqref{eq} assume the form:
\be h_{k+1}\phi_{k+1}=\frac{1}{a_k} h_k \; \phi_k{}^\sigma,\ee
\be a_k\; \frac{({h_{k+1}}^{\sigma^{-1}})^2}{\mu\; \mu^{\sigma^{-1}}}+a_k\phi_{k+1}^2+b_k=\frac{{h_k}^2}{\mu^\sigma\; \mu}+{\phi_k}^2. \ee
If we take $\T=\Z$, then $\sigma(x)=x+1$ and $\mu(x)=1$.
Let's start from $h_0(x)=x$, $\phi_0(x)=-x$ and try to compute the operators $\A_1$ and $\A_1^*$. We get the equations
\be h_1(x)\phi_1(x) = -\frac{1}{a_0} x (x+1),\ee
\be a_0 (h_1(x-1))^2+a_0 \phi_1(x)^2 + b_0 = 2 x^2.\ee
We can take as a solution $h_1(x)=x+1$, $\phi_1(x)=-x$, $a_0=1$, $b_0=0$ and obtain the operators 
\be \A_0 = x\,\Delta, \qquad \A_0^* = -\nabla\, x,\ee
\be \A_1 = (x+1)\,\Delta + 1, \qquad \A_1^* = -x\, \nabla.\ee
Thus the equation \eqref{t-rown} assumes the following forms for $k=0$ and $k=1$:
\be -x^2\psi(x+1) + (2x^2-2x+1) \psi(x) + (x-1)^2\psi(x) = \lambda \psi(x),\ee
\be -x\big( (x+1)\psi(x+1)-2x\psi(x)+(x-1)\psi(x-1)\big) = \lambda \psi(x).\ee
From solutions of either of them, one can obtain solutions of the other by the ladder property \eqref{new_sol}-\eqref{new_sol2}.

\end{document}